\documentclass[11pt]{article}
\usepackage{amsmath, amssymb, amsfonts, amstext, amsthm, textcomp, enumerate}
\usepackage[mathscr]{euscript}
\usepackage{float}
\usepackage{booktabs}
\usepackage{mathtools}
\usepackage[left=20mm,top=0.5in,bottom=15mm]{geometry}
\usepackage{graphicx}
\usepackage{caption}
\usepackage{epstopdf}
\usepackage{longtable}
\usepackage[utf8]{inputenc}
\usepackage{color}
\usepackage{xcolor}
\usepackage{hyperref}
\usepackage{graphicx}
\usepackage{dcolumn}% Align table columns on decimal point
\usepackage{bm}% bold math
\usepackage{epstopdf}
\usepackage[english]{babel}
\usepackage{subfigure}
\usepackage{color}
\usepackage{ulem}

\newtheorem{thm}{Theorem}[section]
\newtheorem{cor}[thm]{Corollary}%[section]
\newtheorem{rem}[thm]{Remark}%[section]
\newtheorem{ex}[thm]{Example}%[section]

\newcommand{\RR}{\mathbb{R}}

\newfont{\bb}{msbm10}

\begin{document}
\allowdisplaybreaks
	\title{Threshold Graphs Allow Few Distinct Eigenvalues: A New Approach}
    \author{
    Jane Breen\thanks{Faculty of Science, Ontario Tech University, Oshawa, ON, Canada (Jane.Breen@ontariotechu.ca)}
       \and Shaun Fallat\thanks{Department of Mathematics and Statistics,
University of Regina, Regina, SK, Canada 
(sfallat@uregina.ca).}
    \and  Johnna Parenteau\thanks{Department of Mathematics and Statistics,
University of Regina, Regina, SK, Canada (Johnna.Parenteau@uregina.ca)}
    }
 \maketitle

\begin{abstract}
For any graph $G$, we associate a family of real symmetric matrices, $S(G)$, where for any $A \in S(G)$, the location of the nonzero off-diagonal entries of $A$ are governed by the adjacency structure of $G$. Let $q(G)$ represent the minimum number of distinct eigenvalues over all matrices in $S(G)$. In this work, we provide an alternative technique to establish that  $q(G) \leq 4$  for any threshold graph $G$ as presented in 
[L. Emilio Allem, C. Hoppen, J. Lazzarin, L. Siviero Sibemberg, F. Colman Tura, The minimum number of distinct eigenvalues of a threshold graph is at most 4,  Linear Algebra and its Applications, 726 (2025) 32–53]. In addition, we show that all connected threshold graphs admit a matrix having any four distinct eigenvalues. Further, such a matrix possesses an eigenvector for an interior eigenvalue having all nonzero coordinates. 
\end{abstract}

% edit keywords and MSC as needed
\noindent Keywords: eigenvalues, totally nonzero eigenvectors, bordering, minimum number of distinct eigenvalues, and threshold graphs. 

\noindent AMS-MSC: 05C50, 15A18, 15A29, and 15B57

 % $x \prec y$
 % $x^{\downarrow}$
 % $x^{\uparrow}$

\vspace{.5cm}
\section{Introduction}

%Let $G=(V(G), E(G))$ be a simple graph with the vertex set $V(G)=\{1,2, \ldots, n\}$ and edge set $E(G)$. The set $S(G)$ of symmetric matrices described by $G$ consists of the set of all symmetric $n \times n$ matrices $A=(a_{ij})$ such that for $i\neq j$, $a_{ij}\neq 0$ if and only if $ij\in E(G)$. 

Let $G=(V(G), E(G))$ be a simple graph with the vertex set $V(G)=\{1,2, \ldots, n\}$ and edge set $E(G)$. The set $S(G)$ of symmetric matrices described by $G$ consists of the set of all symmetric $n \times n$ matrices $A=(a_{ij})$ such that for $i\neq j$, $a_{ij}\neq 0$ if and only if $ij\in E(G)$. 

The study of the eigenvalues of matrices associated with graphs lies at the intersection of spectral graph theory, linear algebra, and combinatorics.  Rather than asking what spectrum arises from a prescribed matrix, we ask to determine which spectra are attainable among all matrices whose zero-nonzero pattern is governed by a given graph. This leads to the foundational `Inverse Eigenvalue Problem for Graphs (IEPG)', one of the central and most challenging problems in combinatorial matrix theory. The IEPG asks for a characterization of all all spectra that occur among the matrices in $S(G)$. Despite substantial progress over the past several decades, complete solutions are known only for a relatively small collection of graph families.

The problem of a minimum number of distinct eigenvalues has emerged as a central theme in recent research on the IEPG. Recent investigations on this parameter have also revealed deep connections with certain graph structures, providing new insights into how combinatorial constraints govern spectral complexity. Nevertheless, determining this number remains challenging even for many well-studied graph classes, and its computation continues to motivate substantial activity in inverse spectral graph theory.
To any graph $G$, we define the minimum number of eigenvalues of $G$ as
\[q(G)=\min\{q(A)\,:\, A\in S(G)\} \] where $q(A)$ represents the number of distinct eigenvalues of $A$. This parameter has garnered significant recent research attention (see, for example, \cite{border, brazil-2, FM, AACF, BFH, LJ2, qsmall, q-graphs} and the two related books \cite{JSbook, HLSbook}), and continues to be explored by researchers interested in the inverse eigenvalue problems for graphs (the so-called IEP-G). 
A significant portion of the research involving $q(G)$ has focused on graphs with $q(G)$ being small. The graphs with $q(G)=2$ have emerged as one of many unresolved, yet core investigations on this parameter. Thus, exhibiting graphs with $q(G)$ small, such as hypercubes, complete bipartite graphs, joins of connected graphs, has become a marked achievement on this subject.  

Along these lines, the minimum number of eigenvalues admitted by a threshold graph was explored by \cite{FM} and provided this value for many classes of threshold graphs and hinted that $q(G)$ for any threshold graph was much less than the number of vertices. In fact, a characterization is provided in \cite{FM} of the threshold graphs $G$ with trace equal to two and with $q(G)=4$. More recently, L. Allem et al. \cite{brazil-2} provided a proof that  any connected threshold graph $G$ satisfies $q(G) \leq 4$. 

Our work will also prove that any connected (and in fact disconnected too) threshold graph $G$ satisfies $q(G) \leq 4$. However, we use a completely different technique than the one presented in \cite{brazil-2}. The lengthy and constructive argument set out in \cite{brazil-2} systematically derives a matrix $A$ that fits a threshold graph and admits the four distinct eigenvalues of $A$ are $-\lambda, 0, \lambda, 2\lambda$, where $\lambda >0$. The argument we present features a much shorter explanation, exhibits the existence of a matrix achieving any four distinct real numbers as its spectrum, but is not explicitly constructive in nature. Our proof relies on a recent bordering mechanism that controls the change in multiplicities of the eigenvalues of a symmetric matrices embedded into a larger symmetric matrix. Along these lines are arguments are based on and verify the existence of so-called totally nonzero eigenvectors, and demonstrates a notion of spectrally arbitrariness in the designated four desired eigenvalues. 

We denote the spectrum of $A$, that is, the multi-set of (ordered) eigenvalues of $A$, by $\sigma(A) = \{ \lambda_1^{(n_1)}, \lambda_2^{(n_2)}, \ldots, \lambda_k ^{(n_k)}\}$, where $\lambda_1 < \lambda_2 < \cdots < \lambda_k$. Furthermore, we say $\lambda_1$ and $\lambda_k$ are the {\it extreme} eigenvalues of $A$ while $\lambda_2, \ldots, \lambda_{k-1}$ are called the {\it interior} eigenvalues of $A$. Here, a simple eigenvalue, $\lambda_i^{(1)}$, is abbreviated to $\lambda_i$. Given a graph $G$, the spectral invariant $q(G)$ is called the minimum number of distinct eigenvalues of a graph $G$. Let $N:=\{1,2,\dots,n\}$ and let $A:=(a_{ij})$ represent an $n \times n$ matrix. For an index set $\alpha\subseteq N,$ let the principal submatrix of $A$, lying in the rows and columns indicated by $\alpha$ be denoted by $A[\alpha].$ When $\alpha$ is empty, we define $A[\emptyset]:=1$.

The class of graphs considered here is known as {\it threshold graphs}. Threshold graphs can be characterized in many ways. Here we employ the notion that any threshold graph may be obtained through an iterative process which begins with an isolated vertex, and where, at each step, either a new isolated vertex is added, or a dominating vertex which is adjacent to all previous vertices, denoted by $G \vee K_1$, is added. As such, we can then represent a threshold graph on $n$ vertices using a binary sequence $(b_1, \ldots, b_n)$. Here $b_i$ is 0 if the vertex $v_i$ was added as an isolated vertex, and $b_i$ is 1 if $v_i$ was added as a dominant vertex. This representation has been called a {\it creation sequence} \cite{HSS}. For convenience, we use 0 as the first character of the string; it represents the first vertex of the graph. The number of characters 1 in the string, called the {\it trace} of the graph and denoted by $tr(G)$, indicates the number of dominating vertices in its construction \cite{Mer}. As we are mainly concerned with connected threshold graphs, we assume that $b_n =1$. A central goal in this work is to study the fewest eigenvalues allowed by a matrix in $S(G)$ when $G$ is a threshold graph.

\section{Background}

In this section, we provide the necessary background  from the literature in order to establish our key results. 

A vector in $\RR^n$ is called {\it totally nonzero} if each of its coordinates is nonzero (sometimes such a vector has been referred to as {\it nowhere zero}). Furthermore, we call an eigenvector $x$ corresponding to the eigenvalue $\lambda$ for a given matrix $A$, a \textit{totally nonzero interior eigenvector} whenever $x$ is totally nonzero and $\lambda$ is an interior eigenvalue of $A$. For ease of notation, we refer to an eigenvector $x$ with this property as TNIE, and that a matrix $A$ satisfies the TNIE property when it has an  interior eigenvalue with a corresponding totally nonzero eigenvector.

Referring to \cite{border}, an \textit{$r$-bordering} of a symmetric $n \times n$ matrix $A$ is any symmetric $(n+r)\times (n+r)$ matrix $B$, which contains $A$ as its $n \times n$ trailing principal submatrix of $B$. The following collection of results from \cite{border} concerns the spectrum of a 1-bordering of a given symmetric matrix and are very useful for the proofs of our main results in Section 3. The next result we present is critical to our analysis and represents a rather general result tracking the changes in the multiplicities of certain eigenvalues and 1-bordering.

\begin{thm}\label{thm:1-bordering} (Theorem 3.1, \cite{border})
Let $A$ be an $n\times n$ symmetric matrix and $A'$ a $1$-bordering of $A$. The following statements are equivalent: 

\begin{enumerate}
\item\label{thm:1-bordering1} $\mathcal{N}$ is the set of distinct eigenvalues $\lambda$ of $A'$ that satisfy $m_{A'}(\lambda)=m_{A}(\lambda)+1$, and 
$\mathcal{R}_0$ is the set of distinct eigenvalues $\lambda$ of $A$ that satisfy $m_{A'}(\lambda)=m_{A}(\lambda)-1$.
\item\label{thm:1-bordering2}
$A'=\left(
\begin{array}{cc}
 \alpha & {\bf b}^TU_0^T \\
 U_0{\bf b} & A \\
\end{array}
\right)$
where $k:=|\mathcal{R}_0|$, $U_0$ is an $n\times k$ matrix with $U_0^T U_0=I_{k}$, and $U_0^TAU_0$ is a $k \times k$ diagonal matrix $D_0$ with distinct eigenvalues equal to $\mathcal{R}_0$. Further, ${\bf b} \in \mathbb{R}^k$ is a nowhere zero vector
so that the matrix $$B=\left(
\begin{array}{cc}
 \alpha & {\bf b}^T \\
 {\bf b} & D_0 \\
\end{array}
\right)$$
has eigenvalues $\mathcal{N}$. 
\end{enumerate}
If the above hold, then
$A'$ is similar to a matrix of the form $D_{\mathcal{N}} \oplus D_1$ for some diagonal matrix $D_1$ via an orthogonal similarity using
\begin{equation}\label{eq:Wsimilarity}
W=\left(
\begin{array}{cc}
 {\bf v}^T & 0 \\
 U_0V_0 & U_1 \\
\end{array}
\right),
\end{equation}
where 
%${\bf v}\in \bR^{|\mathcal{N}|}$ and 
$V=\left(
\begin{array}{c}
 {\bf v}^T \\
 V_0
\end{array}
\right)\in \RR^{|\mathcal{N}|\times |\mathcal{N}|}$ is an orthogonal matrix that
satisfies $V^T BV=D_{\mathcal{N}}$, and $U=\left(
\begin{array}{cc}U_0 & U_1\end{array}
\right)$ is an orthogonal matrix that satisfies $U^TAU=D_{0} \oplus D_1$. 
\end{thm}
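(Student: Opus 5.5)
The plan is to reduce everything to the eigenspaces of $A$. Fix an orthonormal eigenbasis of $A$, written as $U=(U_0\;U_1)$ with $U^TAU=D_0\oplus D_1$. The columns of $U_0$ are chosen so that, for each eigenvalue $\mu\in\mathcal{R}_0$, one unit vector is picked inside the $\mu$-eigenspace. In this basis $A'$ becomes
\[
\begin{pmatrix} 1 & 0\\ 0 & U^T\end{pmatrix} A' \begin{pmatrix} 1 & 0\\ 0 & U\end{pmatrix}
=\begin{pmatrix} \alpha & {\bf c}^T\\ {\bf c} & D\end{pmatrix},
\]
where ${\bf c}=U^T{\bf y}$ and ${\bf y}$ is the border vector. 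This is an arrowhead matrix. Its spectrum is governed by the projections of ${\bf y}$ onto the eigenspaces of $A$, which gives two standard facts:
\begin{itemize}
\item If $P_\mu{\bf y}=0$ for an eigenvalue $\mu$ of $A$, then every $\mu$-eigenvector of $A$, padded with a $0$ in the first coordinate, is an eigenvector of $A'$.
\item If $P_\mu{\bf y}\neq 0$, then within the $\mu$-eigenspace we may rotate the basis so that ${\bf y}$ has exactly one nonzero coordinate there.
\end{itemize}
By interlacing, $m_{A'}(\lambda)-m_A(\lambda)\in\{-1,0,1\}$ for every $\lambda$.

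\emph{(2)$\Rightarrow$(1).} Let $U_0,{\bf b},\alpha$ be as in (2), and complete $U_0$ to an orthogonal $U$. Then $U^T(U_0{\bf b})=({\bf b},0)$, so $A'$ is orthogonally similar to $B\oplus D_1$. This block form is exactly the statement that $W$ diagonalizes $A'$, which also proves the final assertion. It remains to read off multiplicities.
\begin{itemize}
\item Since ${\bf b}$ is nowhere zero and $D_0$ has distinct diagonal entries, the Cauchy interlacing for the arrowhead matrix $B$ is strict. Hence no eigenvalue of $D_0$ is an eigenvalue of $B$, and $B$ has $k+1$ distinct eigenvalues $\mathcal{N}$ that strictly interlace $\mathcal{R}_0$.
\item For $\lambda\in\mathcal{N}$: $\lambda\notin\mathcal{R}_0$, so $m_{A'}(\lambda)=1+m_{D_1}(\lambda)=1+m_A(\lambda)$.
\item For $\mu\in\mathcal{R}_0$: $m_{A'}(\mu)=m_{D_1}(\mu)=m_A(\mu)-1$.
\item For any other $\lambda$: $m_{A'}(\lambda)=m_A(\lambda)$.
\end{itemize}
This is precisely (1).

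\emph{(1)$\Rightarrow$(2).} Write $A'$ with border $(\alpha,{\bf y})$. Let $\mathcal{R}$ be the set of eigenvalues $\mu$ of $A$ with $P_\mu{\bf y}\neq 0$, and for each such $\mu$ set $u_\mu=P_\mu{\bf y}/\|P_\mu{\bf y}\|$. Let $U_0$ have the columns $u_\mu$ and put $b_\mu=\|P_\mu{\bf y}\|>0$, so that ${\bf y}=U_0{\bf b}$ and $U_0^TAU_0=D_0$ is diagonal with distinct entries $\mathcal{R}$. The forward argument then shows that $\mathcal{R}$ is exactly the set of eigenvalues whose multiplicity drops, and that the eigenvalues of $B$ are exactly those whose multiplicity rises. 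Since these sets are uniquely determined by $A$ and $A'$, hypothesis (1) forces $\mathcal{R}=\mathcal{R}_0$ and $\sigma(B)=\mathcal{N}$.

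The main obstacle is the strict-interlacing claim for $B$, namely that it has no eigenvalue in common with $D_0$. This follows from the secular equation
\[
\lambda-\alpha=\sum_i \frac{b_i^2}{\lambda-d_i},
\]
which has exactly one root in each gap between consecutive $d_i$ and one root outside on each side. In addition, if $\lambda=d_j$ were an eigenvalue with eigenvector $(x_0,{\bf x})$, then row $j$ of $B$ gives $b_jx_0=0$, so $x_0=0$. The first row then gives $\sum_i b_ix_i=0$, and the remaining rows give $(d_i-d_j)x_i=0$, so $x_i=0$ for $i\ne j$. Together these force $b_jx_j=0$, hence $x_j=0$ and the eigenvector is zero, a contradiction.
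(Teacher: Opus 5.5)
The paper does not prove this statement; it is quoted from \cite{border} as background, so there is no in-paper argument to compare yours with. On its own terms, your proof is correct and is the natural one. You decompose the border vector into its spectral projections ${\bf y}=\sum_\mu P_\mu{\bf y}$ and take $U_0$ to be the normalized nonzero projections, so ${\bf b}$ has positive, hence nowhere zero, entries. You then reduce $A'$ to $B\oplus D_1$ by the orthogonal similarity $1\oplus U$ and read off multiplicities from the fact that an arrowhead matrix with nowhere-zero border and distinct $d_i$ has no $d_j$ as an eigenvalue.

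Your direct eigenvector computation for that last fact is sound, and it makes the secular-equation paragraph unnecessary. Cauchy interlacing gives $\beta_1\le d_1\le\beta_2\le\cdots\le d_k\le\beta_{k+1}$. Ruling out equality with every $d_j$ makes all these inequalities strict, which already yields $k+1$ distinct eigenvalues of $B$ strictly interlacing $\mathcal{R}_0$.

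One point should be stated explicitly. In (2)$\Rightarrow$(1) you ``complete $U_0$ to an orthogonal $U$'' and use $U^TAU=D_0\oplus D_1$. This requires $AU_0=U_0D_0$, i.e.\ the columns of $U_0$ are eigenvectors of $A$. Then $\operatorname{col}(U_0)^\perp$ is $A$-invariant and $U_1$ can be chosen as an orthonormal eigenbasis of it.

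The condition written in (2), namely $U_0^TU_0=I_k$ with $U_0^TAU_0$ diagonal, does not imply this: for $k=1$ any unit vector $u$ qualifies, with $D_0=(u^TAu)$. Under that literal reading, (2)$\Rightarrow$(1) is false. For example, $A=\mathrm{diag}(0,2)$ and $U_0=(1,1)^T/\sqrt{2}$ give $\mathcal{R}_0=\{1\}$, which is not even an eigenvalue of $A$.

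The theorem's final clause, with $U^TAU=D_0\oplus D_1$, shows the eigenvector reading is the intended one, and your opening setup already adopts it. State it as the hypothesis you are using, and choose $U_1$ accordingly.
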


\begin{rem}
    Referring to Theorem \ref{thm:1-bordering}, we consider the specific case when  $|\mathcal R_0|=1$, and thus the matrix $B$ is then a $2 \times 2$ matrix and $b$ is a nonzero scalar. The set $\mathcal{N}$ consists of two eigenvalues each with totally nonzero eigenvectors, which are the columns of the matrix $V$ above. Upon closer inspection of the first $|\mathcal{N}|$ columns of the matrix $W$ that diagonalizes this 1-bordering of $A$ (namely $A'$), it follows that these columns are totally nonzero whenever we have the column $U_0$ being totally nonzero. This observation is essential for our proofs in Section 3.
\end{rem}

The following result illustrates the connection between 1-bordering as outlined in the previous theorem and its effect on eigenvalue multiplicities, beyond the classical conditions known as the {\it Cauchy's interlacing inequalities}, see, for example, \cite{HJ1}. If two sequences of real numbers satisfy Cauchy's interlacing inequalities with strict inequalities holding everywhere, we say these sequences {\it strictly interlace}.

\begin{cor}\label{lem:1-border} (Corollary 3.2, \cite{border})
Let $A$ be a symmetric matrix $n\times n$; $\mathcal{R}$ the set of distinct eigenvalues of $A$ and $\mathcal{R}_0\subseteq \mathcal{R}$. If $\mathcal{N}$ is any set of $|\mathcal{R}_0|+1$ distinct real numbers which strictly interlace $\mathcal{R}_0$, then there is a $1$-bordering $A'$ of $A$ so that for $\lambda \in \RR$,
\begin{align*}
m_{A'}(\lambda)=\begin{cases}m_{A}(\lambda)-1 &\text{if }\lambda \in \mathcal{R}_0, \\
m_{A}(\lambda)+1 &\text{if }\lambda \in \mathcal{N}, \\
m_{A}(\lambda) &\text{otherwise},\end{cases}
\end{align*}
where $m_{A'}(\lambda)=0$ means that $\lambda$ is not an eigenvalue of $A'$. 
\end{cor}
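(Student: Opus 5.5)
The plan is to realize the prescribed multiplicity changes through statement (2) of Theorem \ref{thm:1-bordering}: choose $U_0$, $D_0$, $\alpha$ and ${\bf b}$ explicitly, then read off the spectrum of $A'$ from the orthogonal similarity \eqref{eq:Wsimilarity}. The only real work is an inverse eigenvalue problem for a small bordered diagonal (arrowhead) matrix $B$.

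\emph{Setting up the bordering.} Write $\mathcal{R}_0=\{r_1<\cdots<r_k\}$ and $\mathcal{N}=\{\mu_1<\cdots<\mu_{k+1}\}$. Strict interlacing means
\[
\mu_1<r_1<\mu_2<r_2<\cdots<r_k<\mu_{k+1}.
\]
For each $i$, pick a unit eigenvector ${\bf u}_i$ of $A$ for $r_i$. Eigenvectors for distinct eigenvalues of a symmetric matrix are orthogonal, so $U_0=[{\bf u}_1\cdots{\bf u}_k]$ satisfies $U_0^TU_0=I_k$ and $U_0^TAU_0=D_0=\mathrm{diag}(r_1,\dots,r_k)$. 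Extend $U_0$ to an orthogonal $U=(U_0\ U_1)$ with $U^TAU=D_0\oplus D_1$. The diagonal of $D_1$ is then $\sigma(A)$ with one copy of each $r_i$ removed.

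\emph{The arrowhead step (main obstacle).} We need $\alpha\in\RR$ and a nowhere zero ${\bf b}\in\RR^k$ so that $B=\begin{pmatrix}\alpha&{\bf b}^T\\ {\bf b}&D_0\end{pmatrix}$ has spectrum $\mathcal{N}$. Expanding the determinant gives
\[
\det(xI-B)=(x-\alpha)\prod_{l}(x-r_l)-\sum_i b_i^2\prod_{l\neq i}(x-r_l).
\]
We want this to equal $p(x):=\prod_j(x-\mu_j)$. To get this, set
\[
\alpha=\sum_j\mu_j-\sum_i r_i, \qquad b_i^2=-\frac{p(r_i)}{\prod_{l\neq i}(r_i-r_l)}.
\]
The choice of $\alpha$ matches the $x^k$ coefficients. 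Evaluating both sides at $x=r_i$ shows that the formula for $b_i^2$ makes them agree at each $r_i$. The difference of the two monic degree-$(k+1)$ polynomials therefore has degree at most $k-1$ and vanishes at $k$ distinct points, so it is identically zero.

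What remains is that each $b_i^2$ is strictly positive, which is a sign count using interlacing. Exactly $i$ of the $\mu_j$ lie below $r_i$, so $\mathrm{sgn}\,p(r_i)=(-1)^{k+1-i}$. Exactly $i-1$ of the $r_l$ lie below $r_i$, so $\mathrm{sgn}\prod_{l\neq i}(r_i-r_l)=(-1)^{k-i}$. Hence $b_i^2>0$, and strictness of the interlacing is what makes $p(r_i)\neq 0$, so ${\bf b}$ is nowhere zero.

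\emph{Conclusion.} Define $A'$ as in statement (2) of Theorem \ref{thm:1-bordering}. By the similarity via $W$ in \eqref{eq:Wsimilarity}, $A'$ is orthogonally similar to $D_{\mathcal{N}}\oplus D_1$. Thus $\sigma(A')$ consists of $\mathcal{N}$ (each element once) together with $\sigma(A)$ minus one copy of each $r_i$.

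Since $\mathcal{N}\cap\mathcal{R}_0=\emptyset$ by strict interlacing, the multiplicities follow case by case:
\begin{itemize}
\item[(a)] each $\lambda\in\mathcal{R}_0$ loses exactly one copy, so $m_{A'}(\lambda)=m_A(\lambda)-1$;
\item[(b)] each $\lambda\in\mathcal{N}$ gains one copy, so $m_{A'}(\lambda)=m_A(\lambda)+1$, and this holds even when $\lambda$ is already an eigenvalue of $A$;
\item[(c)] every other $\lambda$ is untouched, so $m_{A'}(\lambda)=m_A(\lambda)$.
\end{itemize}
This is the claimed formula. Alternatively, the implication (2)$\Rightarrow$(1) of Theorem \ref{thm:1-bordering} gives the same conclusion directly.
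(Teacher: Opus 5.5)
Your proof is correct and takes essentially the same route as the paper. The paper quotes this result from \cite{border} without proof; there it follows from the implication (2)$\Rightarrow$(1) of Theorem~\ref{thm:1-bordering}, once one has a bordered diagonal matrix $B$ with spectrum $\mathcal{N}$ and nowhere zero border. Your explicit choice $\alpha=\sum_j\mu_j-\sum_i r_i$, $b_i^2=-p(r_i)/\prod_{l\neq i}(r_i-r_l)>0$ supplies exactly that ingredient, and your multiplicity count via $A'\sim D_{\mathcal{N}}\oplus D_1$ is sound.
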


If we want a $1$-bordering of the matrix $A \in S(G)$ to produce a matrix $A' \in S(K_1 \vee G)$, then we need $U_0{\bf b}$ to have no zero entries in Theorem \ref{thm:1-bordering} above. This will happen for most choices of ${\bf b}$, unless $U_0$ contains a zero row, or equivalently, unless the eigenvectors corresponding to the eigenvalues in $\mathcal R_0$ all have a zero entry in the same position. The next results consider the case $|\mathcal R_0|=1$. We call an eigenvalue of a symmetric matrix \textit{extreme} if it is the smallest or the largest eigenvalue of that matrix.

\begin{cor}\label{cor:join-with-K1} (Corollary 4.1, \cite{border})
Suppose $G$ is a non-empty graph and there exists an $A\in S(G)$ with a nowhere zero eigenvector associated with some eigenvalue $\lambda$ of $A$. Then there exists a $1$-bordering $A'$ of $A$ in $S(K_1 \vee G)$ so that:
\begin{itemize}
\item $q(A')=q(A)+1$ if $\lambda$ is an extreme eigenvalue, 
\item $q(A')=q(A)$ if $\lambda$ is not simple nor an extreme eigenvalue,  
\item $q(A')=q(A)-1$ if $\lambda$ is simple and not an extreme eigenvalue. 
\end{itemize}
\end{cor}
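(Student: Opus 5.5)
The plan is to obtain this corollary from Corollary \ref{lem:1-border} (equivalently Theorem \ref{thm:1-bordering}) with $\mathcal{R}_0=\{\lambda\}$, choosing the two new eigenvalues $\mathcal{N}=\{\mu_1,\mu_2\}$ so that $\mu_1<\lambda<\mu_2$.

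\emph{Choosing the bordering.} Let $x$ be the given nowhere zero unit eigenvector of $A$ for $\lambda$, and take $U_0=x$. By Theorem \ref{thm:1-bordering}, for any $\alpha$ and any nonzero scalar $b$ the matrix
\[
A'=\begin{pmatrix}\alpha & b\,x^T\\ b\,x & A\end{pmatrix}
\]
realizes $\mathcal{R}_0=\{\lambda\}$. Its new eigenvalues $\mathcal{N}$ are those of the $2\times 2$ matrix $\begin{pmatrix}\alpha & b\\ b & \lambda\end{pmatrix}$, and these strictly interlace $\lambda$. Conversely, Corollary \ref{lem:1-border} lets us prescribe any $\mu_1<\lambda<\mu_2$.

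\emph{Membership in $S(K_1\vee G)$.} Since $b\neq 0$ and $x$ is nowhere zero, the border column $b\,x$ has no zero entries. The new vertex is therefore adjacent to every vertex of $G$, while $A$ keeps the pattern of $G$. Hence $A'\in S(K_1\vee G)$.

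\emph{Counting distinct eigenvalues.} The multiplicity formula says that $\lambda$ loses one from its multiplicity, $\mu_1$ and $\mu_2$ each gain one, and every other multiplicity is unchanged. So
\[
q(A')=q(A)-\delta+\#\{\mu_i\notin\sigma(A)\},
\]
where $\delta=1$ if $\lambda$ is simple and $\delta=0$ otherwise. Let $\lambda^-$ and $\lambda^+$ denote the eigenvalues of $A$ adjacent to $\lambda$, when they exist. We pick the $\mu_i$ case by case.

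\begin{itemize}
\item If $\lambda$ is interior, set $\mu_1=\lambda^-$ and $\mu_2=\lambda^+$. These are distinct existing eigenvalues strictly interlacing $\lambda$, so they add no new values. This gives $q(A')=q(A)-1$ when $\lambda$ is simple and $q(A')=q(A)$ otherwise.
\item If $\lambda$ is the smallest eigenvalue, only $\mu_1<\lambda$ is new. Take $\mu_2=\lambda^+$ when it exists; otherwise $q(A)=1$, and we take $\mu_2$ arbitrary $>\lambda$.
\item The case where $\lambda$ is the largest eigenvalue is symmetric.
\end{itemize}

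\emph{Extreme case in detail.} When $\lambda$ is extreme, $q(A')=q(A)+1-\delta+\epsilon$. Here $\epsilon=1$ only in the trivial situation $q(A)=1$ with $\lambda$ both smallest and largest; then $\lambda$ is non-simple, since $G$ is non-empty forces $n\ge 2$.

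Now compare with the stated count. If $\lambda$ is extreme and non-simple with $q(A)\ge 2$, we get $q(A')=q(A)+1$, as required. If $\lambda$ is extreme and simple with $q(A)\ge 2$, we get $q(A)+1-1+0=q(A)$ rather than $q(A)+1$. To match the stated count exactly, in that subcase take $\mu_2$ new instead, lying strictly between $\lambda$ and $\lambda^+$; then $q(A')=q(A)+1$.

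When $q(A)=1$, $A=\lambda I$ is diagonal. This is impossible because $G$ is non-empty, so $A$ has a nonzero off-diagonal entry. Hence $q(A)\ge 2$ always.

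\emph{Main obstacle.} The only delicate point is this bookkeeping of coincidences between $\mathcal{N}$ and $\sigma(A)$ in each case. In particular, we need strict interlacing to allow choosing $\mu_i$ equal to neighbouring eigenvalues (interior case) or strictly inside the gaps (extreme case), which it does.
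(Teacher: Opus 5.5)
Your argument is correct and is exactly the mechanism the paper points to (it quotes this result from \cite{border} without proof): apply Theorem~\ref{thm:1-bordering} with $\mathcal{R}_0=\{\lambda\}$ and $U_0$ the nowhere-zero unit eigenvector, so the border $b\,x$ is nowhere zero, then choose $\mathcal{N}$ case by case as you do. Two small clean-ups. First, prescribing $\mu_1<\lambda<\mu_2$ with this particular $U_0$ comes from the explicit choice $\alpha=\mu_1+\mu_2-\lambda$, $b^2=(\lambda-\mu_1)(\mu_2-\lambda)>0$, not from Corollary~\ref{lem:1-border}, which does not fix $U_0$. Second, ``non-empty'' must be read as ``has an edge'' (your second reading): for edgeless $G$ on $n\ge 2$ vertices the hypothesis forces $A=\lambda I$, and then every admissible $A'$ has $q(A')=3\neq q(A)+1$.
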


The next two results from \cite{MS} and and \cite{JDS} (extending works from the classical work in \cite{Parter}) will be used in the argument of our main result in order to lay the foundation for the existence of totally nonzero eigenvectors. 

\begin{thm}(Theorem 4.3, \cite{MS}) 
For a given connected graph $G$ on $n$ vertices, and given distinct eigenvalues $\lambda_1, \lambda_2, \ldots, \lambda_n$, there exists a real symmetric matrix $A$ whose graph is $G$ and its eigenvalues are $\lambda_1, \lambda_2, \ldots, \lambda_n$ such that none of the eigenvectors of $A$ has a zero entry. 
\label{MonShad}
\end{thm}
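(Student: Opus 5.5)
The plan is to reduce to a spanning tree of $G$ and then pass back to $G$ by a small perturbation. Two standard tools do the work: the strong spectral property (SSP) with its supergraph lemma, and continuity of eigenvectors for simple eigenvalues. Fix a spanning tree $T$ of $G$, which exists since $G$ is connected. The target is a matrix $A_T\in S(T)$ with three properties:
\begin{enumerate}[(i)]
\item its spectrum is $\{\lambda_1,\dots,\lambda_n\}$;
\item none of its eigenvectors has a zero entry;
\item it has the SSP, meaning the only symmetric $X$ with $A_T\circ X=0$, $I\circ X=0$ and $[A_T,X]=0$ is $X=0$.
\end{enumerate}

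Once $A_T$ is available, the supergraph lemma for the SSP (proved via the implicit function theorem applied to the map $(Q,\text{entries})\mapsto Q^T D Q$ restricted to the pattern) gives, for every $\varepsilon>0$, a matrix $A\in S(G)$ with the same spectrum as $A_T$ and $\|A-A_T\|<\varepsilon$. All $\lambda_i$ are simple, so the unit eigenvectors of $A$ can be chosen to depend continuously on $A$ near $A_T$. Each eigenvector of $A_T$ is totally nonzero, so for $\varepsilon$ small enough every eigenvector of $A$ is also totally nonzero. This gives the theorem for $G$.

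For the tree case I would argue by induction on $n$, building $T$ by attaching leaves. For $n=1$ the result is trivial. Suppose $T'=T-\ell$, where $\ell$ is a leaf adjacent to $u$, and suppose we already have $A_{T'}\in S(T')$ with any prescribed distinct spectrum, totally nonzero eigenvectors, and the SSP. Choose the spectrum $\mu_1<\dots<\mu_{n-1}$ of $A_{T'}$ to strictly interlace the target $\lambda_1<\dots<\lambda_n$. Adjoin $\ell$ by the arrow form
\[
A_T=\begin{pmatrix} a & c\,e_u^T\\ c\,e_u & A_{T'}\end{pmatrix}.
\]
Its characteristic polynomial is $(x-a)p(x)-c^2p_u(x)$, where $p$ and $p_u$ are the characteristic polynomials of $A_{T'}$ and of $A_{T'}$ with row and column $u$ deleted. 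Because the eigenvectors of $A_{T'}$ are nonzero at $u$, every $\mu_i$ is a simple root of $p$ that is not a root of $p_u$. We then solve for $a$ and $c^2$, as in the bordering results (Theorem~\ref{thm:1-bordering}, Corollary~\ref{lem:1-border}), so that the spectrum is exactly the $\lambda_i$. This is the usual Jacobi/tree inverse-problem step.

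An eigenvector $x$ of $A_T$ for $\lambda_j$ has $x_\ell=c\,x_u/(\lambda_j-a)$. Its restriction to $T'$ is proportional to $(\lambda_j I-A_{T'})^{-1}e_u=\sum_i y_i(y_i)_u/(\lambda_j-\mu_i)$, where the $y_i$ are the eigenvectors of $A_{T'}$. Nonvanishing of each coordinate is then a nontrivial rational condition on the free parameters. Those parameters are the choice of the interlacing $\mu$'s and the freedom from the SSP manifold of $A_{T'}$. So a generic choice avoids all zeros.

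The main obstacle is making this genericity argument rigorous. Each coordinate condition must fail to vanish identically, and the SSP must propagate to $A_T$. For the first point I would exhibit, for each coordinate $v$, one admissible parameter value where it is nonzero. Parter--Wiener theory on trees helps here: $x_v=0$ for the eigenvalue $\lambda_j$ would force $\lambda_j$ to be an eigenvalue of $A_T(v)$, and this can be destroyed by moving the $\mu$'s. For the SSP I would use the fact that a tree matrix whose eigenvalues are all simple has the SSP whenever no eigenvector vanishes on both ends of an edge. The totally nonzero condition already gives this, so (iii) should follow from (ii).
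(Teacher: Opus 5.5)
The paper does not prove this statement; it simply quotes it from \cite{MS}, so your argument has to stand on its own. Your reduction is sound: a spanning tree, the SSP supergraph lemma, and continuity of unit eigenvectors for simple eigenvalues do carry a tree matrix with (i)--(iii) over to $S(G)$. The gap is in constructing $A_T$, at the leaf step.

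Once $A_{T'}$ is fixed, both $p$ and $p_u$ are fixed. The identity $(x-a)p(x)-c^2p_u(x)=\prod_{j=1}^n(x-\lambda_j)$ is then $n$ coefficient equations in only two unknowns, $a$ and $c^2$.
\begin{itemize}
\item The $x^{n-1}$ coefficient forces $a=\sum_j\lambda_j-\sum_i\mu_i$.
\item The identity then holds only if $p_u$ is the monic normalization of $(x-a)p(x)-\prod_j(x-\lambda_j)$. So $\sigma(A_{T'}(u))$ must be one specific set of $n-2$ numbers interlacing the $\mu_i$.
\item Equivalently, the squared $u$-coordinates of the unit eigenvectors of $A_{T'}$ must be proportional to
\[
\gamma_i=-\prod_{j}(\mu_i-\lambda_j)\Big/\prod_{k\neq i}(\mu_i-\mu_k)>0 .
\]
\end{itemize}
Already for $n=3$, with $T'$ the edge $uw$, this pins $(A_{T'})_{ww}$ to a single value in $(\mu_1,\mu_2)$. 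Your induction hypothesis prescribes only $\sigma(A_{T'})$, so nothing guarantees this. The constraint also uses up the isospectral freedom of $A_{T'}$ that your genericity argument relies on.

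Theorem~\ref{thm:1-bordering} and Corollary~\ref{lem:1-border} do not repair this. Hitting an arbitrary interlacing target requires $\mathcal R_0=\sigma(A_{T'})$. The border $U_0{\bf b}$ is then a multiple of $e_u$ only when $U_0^Te_u$ is proportional to ${\bf b}$, which is the same constraint again. Otherwise the new vertex is adjacent to further vertices of $T'$ rather than being a leaf.

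The missing idea is to induct on rooted spectral data, as in Duarte's theorem for trees. That theorem says: for a tree $T$, a vertex $u$, and strictly interlacing $\lambda_1<\nu_1<\cdots<\nu_{n-1}<\lambda_n$, there is $A\in S(T)$ with $\sigma(A)=\{\lambda_j\}$ and $\sigma(A(u))=\{\nu_i\}$. Its proof distributes the $\nu_i$ over the branches at $u$. It then prescribes, recursively, the spectrum of each branch with its neighbor of $u$ deleted, as the zeros of $\sum\gamma_i/(x-\nu_i)$ over that branch's $\nu_i$, with the $\gamma_i$ now computed from $(\lambda,\nu)$.

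Two further points remain open even after this repair.
\begin{itemize}
\item A unit eigenvector for a simple $\lambda$ satisfies $x_v^2=\chi_{A(v)}(\lambda)/\chi_A'(\lambda)$, so rooted data only gives a nonzero entry at the root. Getting every vertex at once needs a real genericity argument, for instance a connected analytic isospectral family on which no function $\chi_{A(v)}(\lambda_j)$ vanishes identically. ``Moving the $\mu$'s'' does not supply this.
\item Your claim that (ii) implies the SSP for tree matrices is stated without proof or reference, and the supergraph step depends on it.
\end{itemize}
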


\begin{thm} \cite{JDS}
For a tree $T$, $A \in S(T)$, and $\lambda \in \sigma(A) \cap \sigma(A(v))$. Then: 
\vspace*{-0.5cm}
\begin{enumerate}[(1)] 
		\item there is a vertex $u \in T$ where $m_{A(u)}(\lambda) = m_A(\lambda) +1$. 
		\item if $m_A(\lambda) \geq 2$, then the vertex $u$ in (1) can be chosen so that $deg(u) \geq 3$, and $\lambda$ is an eigenvalue of at least three branches at $u$. 
		\item if $m_A(\lambda) =1$, then $u$ can be chosen with $deg(u) \geq 2$, and $\lambda$ is an eigenvalue of at least two branches at $u$. 
	\end{enumerate}
\label{expart}\end{thm}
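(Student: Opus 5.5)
The plan is to work with the adjugate and a zero entry of an eigenvector, find a candidate vertex $u$, and then use a ``downer branch'' criterion and induction on $n$ to get the multiplicity increase. Throughout, write $E=\ker(A-\lambda I)$ and $m=m_A(\lambda)$.

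\textbf{Step 1: an eigenvector vanishing at $v$.} I will first produce some $x\in E$ with $x_v=0$.
\begin{itemize}
\item If $m\ge 2$, such an $x\ne 0$ exists because the condition $x_v=0$ is a single linear constraint on $E$.
\item If $m=1$, I will use the adjugate. We have $\operatorname{adj}(\lambda I-A)=c\,xx^T$ with $c\neq0$, and the $(v,v)$ entry gives $\det(\lambda I-A(v))=c\,x_v^2$. Since $\lambda\in\sigma(A(v))$, this forces $x_v=0$.
\end{itemize}

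\textbf{Step 2: a candidate vertex $u$.} Since $x\neq0$, $x_v=0$ and $T$ is connected, there is a vertex $u$ with $x_u=0$ that has a neighbour $w$ with $x_w\ne0$. Let $B_w$ be the branch of $T-u$ containing $w$. Because $B_w$ meets the rest of $T$ only through $u$, the restriction of $x$ to $B_w$ is a $\lambda$-eigenvector of $A[B_w]$. Row $u$ of $(A-\lambda I)x=0$ reads $\sum_{w'\sim u}a_{uw'}x_{w'}=0$. Since $x_w\neq 0$, at least two neighbours of $u$ carry nonzero entries, and they lie in distinct branches. So $\deg u\ge2$ and $\lambda$ is an eigenvalue of at least two branches at $u$, which is the shape required in (3).

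\textbf{Step 3: the multiplicity increase.} Note $A(u)=\bigoplus_B A[B]$ over the branches $B$ at $u$, so $m_{A(u)}(\lambda)=\sum_B m_{A[B]}(\lambda)$. The eigenvectors of $A$ vanishing at $u$ correspond exactly to vectors $y\in\ker(A(u)-\lambda I)$ with $a_u^Ty=0$, where $a_u$ is row $u$ of $A$ with the $u$ entry deleted. The restriction of $x$ to $B_w$ padded by zeros violates this constraint, so the constraint is nontrivial. Hence
\[
\dim\{z\in E: z_u=0\}=m_{A(u)}(\lambda)-1 .
\]
Therefore $m_{A(u)}(\lambda)=m+1$ exactly when every vector in $E$ vanishes at $u$.

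\textbf{Main obstacle: choosing $u$ well.} The vertex from Step 2 need not have every eigenvector vanishing there, so the choice must be refined. I would use the downer-branch criterion: $u$ is Parter, i.e.\ $m_{A(u)}(\lambda)=m+1$, as soon as some branch $B$ at $u$ has its neighbour $w\in B$ of $u$ as a downer, meaning $m_{A[B]-w}(\lambda)=m_{A[B]}(\lambda)-1$. This follows from a Schur complement / rank computation on $A-\lambda I$ across the edge $uw$.

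To find such a branch, I would induct on $n$. Apply the whole statement to the smaller tree $B_w$ with the eigenvalue $\lambda$.
\begin{itemize}
\item If $w$ is not a downer in $B_w$, then $\lambda\in\sigma(A[B_w])\cap\sigma(A[B_w](w))$, and the induction hypothesis gives a Parter vertex $u'$ inside $B_w$ for $A[B_w]$. I then transfer $u'$ back to $T$: the branch of $T-u'$ containing $u$ absorbs the rest of the tree. The multiplicity count in that branch needs checking using the same direct-sum decomposition, and this transfer is the technical heart of the argument.
\item If $w$ is a downer, then $u$ itself is Parter by the criterion above.
\end{itemize}

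\textbf{Parts (2) and (3).} These follow from the counting in Step 3. At a Parter vertex, $\sum_B m_{A[B]}(\lambda)=m+1$. The branches at $u$ that are downer branches contribute the new eigenvector directions. If $m\ge2$, I would run the same argument with an eigenvector chosen so that its row-$u$ equation involves at least three nonzero neighbour terms, and if that fails, descend to a neighbouring Parter vertex, to obtain $\deg u\ge3$ with $\lambda$ in at least three branches. If $m=1$, Step 2 already yields two branches at $u$.
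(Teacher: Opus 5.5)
The paper only quotes this theorem from the literature, so there is no in-paper proof to compare with; I assess your argument on its own. Steps 1--3 are correct, and your ``main obstacle'' for (1) is illusory: the vertex $u$ from Step 2 is \emph{always} Parter. Indeed $x|_{B_w}$ is a $\lambda$-eigenvector of $A[B_w]$ with $x_w\neq 0$, so $w$ is a downer in $B_w$, and your own downer criterion applies. To see it directly, take $z\in E$. The rows of $B_w$ give $(A[B_w]-\lambda I)z|_{B_w}=-a_{wu}z_u e_w$. Pairing with $x|_{B_w}$ and using symmetry gives $a_{wu}z_u x_w=0$, so $z_u=0$ for every $z\in E$, and Step 3 then gives $m_{A(u)}(\lambda)=m+1$. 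Thus the case ``$w$ is not a downer'' never occurs, and you can delete the induction and the transfer step you could not verify. Parts (1) and (3) then follow at once from Steps 1--3.

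The genuine gap is (2). At a Parter vertex you only know $\sum_B m_{A[B]}(\lambda)=m+1\ge 3$, and this multiplicity may sit in just two branches, one of them of multiplicity $\ge 2$. Your suggestion does not address this and can fail outright; it was to find an eigenvector whose row-$u$ equation has three nonzero terms, or else move to a neighbouring Parter vertex. Here is a counterexample.
\begin{itemize}
\item Take the tree with edges $p_1c_1$, $q_1c_1$, $c_1z$, $zc_2$, $c_2p_2$, $c_2q_2$, let $A$ be its adjacency matrix with the $(z,z)$ entry replaced by $1$, and let $\lambda=0$.
\item Then $E=\operatorname{span}\{e_{p_1}-e_{q_1},\,e_{p_2}-e_{q_2}\}$ and $m=2$.
\item At every vertex where some eigenvector vanishes, at most two neighbours carry nonzero entries.
\item No neighbour of $c_1$ is Parter: $m_{A(z)}(0)=2=m$, and $p_1,q_1$ are downers.
\item Yet $c_1$ is the desired vertex: $\lambda$ is an eigenvalue of its three branches $\{p_1\}$, $\{q_1\}$ and $\{z,c_2,p_2,q_2\}$. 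The third comes from an eigenvector that is invisible in the row-$c_1$ equation.
\end{itemize}

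What is missing is an argument that pushes the excess multiplicity into three branches. Here is one way to do it within your framework.
\begin{itemize}
\item If no Parter vertex has a branch $B$ with $m_{A[B]}(\lambda)\ge 2$, any Parter vertex works, since its branch multiplicities are $\le 1$ and sum to $m+1\ge 3$.
\item Otherwise choose such a pair $(u,B)$ with $|B|$ minimal. Let $w$ be the neighbour of $u$ in $B$, and take $0\neq y\in\ker(A[B]-\lambda I)$ with $y_w=0$. Padding $y$ by zeros gives $\tilde y\in E$.
\item Let $u'$ be a vertex of the component of $B\setminus\operatorname{supp}(y)$ containing $w$ that is adjacent to $\operatorname{supp}(y)$. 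By the argument above, $u'$ is Parter.
\item The neighbour of $u'$ toward $u$ has zero $\tilde y$-entry. So the row-$u'$ equation for $\tilde y$ puts $\lambda$ in at least two branches at $u'$ other than the branch $R$ containing $u$.
\item If $\lambda\in\sigma(A[R])$, you have three branches.
\item If not, the multiplicity $m+1\ge 3$ is spread over branches strictly inside $B$. By minimality each has multiplicity $\le 1$, so again at least three branches carry $\lambda$.
\end{itemize}
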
 

\begin{rem}
One consequence of the Parter-Wiener Theorem for trees (see also Theorem \ref{expart}) is the existence of a nonzero coordinate in every eigenvector that corresponds to a so-called Parter vertex (the designated vertex $u$ in (1) of Theorem \ref{expart}). It is known (see \cite[Prop. 21.]{KS}) that the coordinate of any eigenvector corresponding to $\lambda$, as in (1) of Theorem \ref{expart},  must be equal to zero. We illustrate this phenomenon in the specific case when $G$ is a star on $n$ vertices, which is particularly useful for our purpose in the next section. Let $A \in S(G)$ where $A = \left [ \begin{array}{c|c}
y & x^T \\ \hline
x & 0 \\
\end{array} \right]$
with $\sigma(A) = \{ \alpha, 0^{(n-2)}, \beta\}$. For each null vector $z$ that satisfies $Az = 0$, $z_1 = 0$. 

To see this, observe that ${\rm rank}\{A\} = 2$. Since $A \in S(G)$, we have $A = \left [ \begin{array}{c|c}
y & x^T \\ \hline
x & D \\
\end{array} \right ]$ where $D = {\rm diag}\{ d_1, d_2, \ldots, d_{n-1}\}$ and $d_j \neq 0$ for some $1 \leq j \leq n$. Consider the principal submatrix based on $\{ 1, j, n \}$ if $j <n$; otherwise, take $\{ 1, 2, n\}$. This $3 \times 3$ principal submatrix has full rank, which contradicts ${ \rm rank}\{A\} = 2$. Hence, $D = 0$, and $A = \left [ \begin{array}{c|c}
y & x^T \\ \hline
x & 0 \\
\end{array} \right ]$. Assume $z$ is a non-trivial null vector for $A$. Then, partition $z$ conformally with $A$; namely, $z = \left [ \begin{array}{c}
z_1 \\ \hline
z_2 \\
\end{array} \right ]$ where $z_2 \in \mathbb{R}^{n-1}$. Consider $Az = 0$, then $ \left [ \begin{array}{c|c}
y & x^T \\ \hline
x & 0 \\
\end{array} \right] \left[ \begin{array}{c}
z_1 \\ \hline
z_2 \\
\end{array} \right ] = 0$ if and only if $\left [ \begin{array}{c}
yz_1 + x^Tz_2 \\ \hline
z_1x \\
\end{array} \right ] = 0$. Since $x$ is totally nonzero, $z_1 =0$. 
\label{PWrem}
\end{rem}

\section{Main Results}

In this section, we present an alternative proof that threshold graphs have at most four distinct eigenvalues. In addition, we establish a technique that proves threshold graphs allow a matrix with four distinct eigenvalues, and having a totally nonzero interior eigenvector. Moreover, we  ascertain that threshold graphs admit matrices whose distinct eigenvalues consist of any four arbitrary real numbers.

\begin{thm}
    If $G$ is a connected threshold graph on $n \geq 4$ vertices, then there exists  $A\in S(G)$ such that $q(A)=4$, and $A$ satisfies the TNIE property. In particular, $q(G) \leq 4$.
    \label{main} 
\end{thm}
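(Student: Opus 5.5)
We argue by induction along the creation sequence $(b_1,\ldots,b_n)$ of $G$, with $b_n=1$. The invariant we carry is: \emph{the current matrix $A$ lies in $S$ of the current connected graph, $q(A)=4$, and some interior eigenvalue $\lambda$ of $A$ has a totally nonzero eigenvector $x$.} Write the sequence as a concatenation of blocks of the form $0^k1$ with $k\ge 0$. At each step the current connected threshold graph $H$ becomes $G'=(H\cup kK_1)\vee K_1$.

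\textbf{Inductive step.} Suppose $A\in S(H)$ satisfies the invariant, with distinct eigenvalues $\lambda_1<\lambda_2<\lambda_3<\lambda_4$ and $\lambda=\lambda_i$ for some $i\in\{2,3\}$.

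First form $\hat A=A\oplus \lambda I_k$, which lies in $S(H\cup kK_1)$. It has the same four distinct eigenvalues as $A$, and $m_{\hat A}(\lambda)=m_A(\lambda)+k$. The $\lambda$-eigenspace of $\hat A$ contains $x\oplus 0$ and the standard vectors $e_j$ for the new coordinates. Hence it contains the totally nonzero vector $u=x\oplus(1,\ldots,1)^T$; normalize $u$.

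Now apply Theorem \ref{thm:1-bordering} with $\mathcal R_0=\{\lambda\}$ and $U_0=u$. Then $U_0 b$ is nowhere zero for any nonzero scalar $b$, so the bordering $A'$ lies in $S(G')$. Choose $\mathcal N=\{\mu_-,\mu_+\}$ with $\mu_-<\lambda<\mu_+$, which Corollary \ref{lem:1-border} permits. The choice depends on the multiplicity of $\lambda$:
\begin{itemize}
\item If $m_{\hat A}(\lambda)\ge 2$ (always the case when $k\ge1$), take $\mu_\pm$ to be the neighbouring eigenvalues $\lambda_{i-1},\lambda_{i+1}$. Then $\lambda$ survives with multiplicity one less, and the distinct eigenvalues are still $\lambda_1,\ldots,\lambda_4$.
\item If $\lambda$ is simple and $k=0$, then $\lambda$ disappears. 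Take one of $\mu_\pm$ to be an existing neighbouring eigenvalue and the other a new number strictly between $\lambda$ and its other neighbour. The new number is then interior, and there are again exactly four distinct eigenvalues.
\end{itemize}

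It remains to recover the TNIE property. By the Remark following Theorem \ref{thm:1-bordering}, the first two columns of $W$ in \eqref{eq:Wsimilarity} are eigenvectors of $A'$ for $\mu_-$ and $\mu_+$. Their first coordinate is an entry of $v$, which is nonzero because $B$ is a $2\times2$ matrix with $b\neq0$. Their remaining coordinates are nonzero multiples of $U_0$. So both columns are totally nonzero, and since one of $\mu_\pm$ is interior in each case above, the invariant is restored for $G'$.

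\textbf{Base case.} We must reach the invariant at the smallest index $m\ge4$ with $b_m=1$, i.e.\ produce some $A\in S(G_m)$ with $q(A)=4$ and the TNIE property. Here $G_m=(H_0\cup sK_1)\vee K_1$, where $H_0$ is either a single vertex or one of the small connected graphs $K_2$, $P_3$, $K_3$. For $H_0$ on $2$ or $3$ vertices, Theorem \ref{MonShad} gives a matrix with prescribed distinct eigenvalues all of whose eigenvectors are totally nonzero. For $K_3$ and $P_3$ this already supplies an interior eigenvalue, and the padding-and-bordering step above then yields four distinct eigenvalues. Two subcases need care:
\begin{itemize}
\item $H_0=K_2$, which has only extreme eigenvalues. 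I would pad with $s\ge1$ isolated vertices whose diagonal entries are a new value, and border using an extreme eigenvalue, which increases $q$ by one as in Corollary \ref{cor:join-with-K1}.
\item The star $K_{1,n-1}$ (sequence $0^{n-1}1$). Remark \ref{PWrem} shows that null-vector type constructions force a zero coordinate at the centre. So I would instead take diagonal entries on the leaves with two distinct values, each repeated, and border with $\mathcal R_0$ consisting of those two values. This gives a spectrum of size $4$ with a totally nonzero interior eigenvector via the $W$-columns.
\end{itemize}

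I expect this base case to be the main obstacle. It requires a short check in each of these few subcases that exactly four distinct eigenvalues result and that one interior one retains a totally nonzero eigenvector. After that, the inductive step above is routine.
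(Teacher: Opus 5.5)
\textbf{The inductive step is right; two base-case constructions fail.} Your inductive step is the paper's own argument. You pad by $\lambda I_k$ at the eigenvalue carrying the totally nonzero eigenvector, then border with $\mathcal R_0=\{\lambda\}$ and $\mathcal N$ equal to the two neighbours, or one neighbour plus a new value when $\lambda$ is simple and $k=0$. Your vector $x\oplus(1,\ldots,1)^T$ spells out what the paper leaves implicit. The gap is in the base case, where the star and $K_2$ constructions do not work.

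\textbf{Star.} Take leaf entries $a<c$ with multiplicities $p,r\ge 2$ and border with $\mathcal R_0=\{a,c\}$. Corollary~\ref{lem:1-border} forces $\mathcal N=\{\nu_1,\nu_2,\nu_3\}$ with $\nu_1<a<\nu_2<c<\nu_3$. Meanwhile $a$ and $c$ survive with multiplicities $p-1,r-1\ge 1$, so the result has five distinct eigenvalues, not four. (For $n=4$, ``each repeated'' is impossible anyway.) One of the two values must be simple. With $a$ repeated $n-2$ times and $c$ once, you get $\{\nu_1,a^{(n-3)},\nu_2,\nu_3\}$. The interior eigenvalue $\nu_2$ is not a leaf entry, so it has the totally nonzero eigenvector with centre coordinate $1$ and leaf coordinates $x_j/(\nu_2-d_j)$. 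This is the paper's star case.

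\textbf{$K_2$.} Let $\sigma(B)=\{\alpha,\beta\}$. Padding by a \emph{new} value $\gamma$ destroys what the bordering needs. The eigenspaces of $B\oplus\gamma I_s$ are spanned by $x_\alpha\oplus 0$, by $x_\beta\oplus 0$, and by $0\oplus\RR^s$, so no eigenvalue has a nowhere-zero eigenvector. Corollary~\ref{cor:join-with-K1} therefore does not apply. Any bordering with $|\mathcal R_0|=1$ produces a vertex missing either the $K_2$ or the isolated vertices, so $A'\notin S(G_m)$.

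The fix is your own padding trick: pad by $\beta$ itself, so that $x_\beta\oplus\mathbf 1$ is nowhere zero. Then border with $\mathcal R_0=\{\beta\}$ and $\mathcal N=\{\mu,\nu\}$, where $\alpha<\mu<\beta<\nu$. Since $s\ge 1$, this gives $\{\alpha,\mu,\beta^{(s)},\nu\}$ with $\mu$ interior and carrying a totally nonzero eigenvector; this is the paper's Case 1. Both elements of $\mathcal N$ must be new here. Applied to $B\oplus\beta I_s$, which has $q=2$, the count $q(A')=q(A)+1$ of Corollary~\ref{cor:join-with-K1} would give only $3$.

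\textbf{$P_3$ and $K_3$ (minor).} With $\sigma(B)=\{\lambda_1,\lambda_2,\lambda_3\}$, padding by $\lambda_2$ and $s\ge 1$, your rule ``take the neighbours'' leaves only three distinct eigenvalues, because $B$ has only three. One element of $\mathcal N$ must be new, for example $\mathcal N=\{\lambda_1,\mu\}$ with $\lambda_2<\mu<\lambda_3$, as in the paper's Case 2. When $s=0$ you are in the four-vertex case, which Theorem~\ref{MonShad} covers directly.
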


\begin{proof} 
Suppose $G$ is a graph on $n = 4$ vertices. Then, by Theorem \ref{MonShad}, for any four distinct real numbers $\lambda_1, \lambda_2, \lambda_3,$ and $\lambda_4$, there exists $A \in S(G)$ such that $\sigma(A) = \{\lambda_1, \lambda_2, \lambda_3, \lambda_4 \}$ and their corresponding eigenvectors are totally nonzero. Hence, $q(A) = 4$, and $A$ has the TNIE property. Now, assume $n \geq 4$, and $tr(G) = 1$. Then $G = (0, \cdots, 0, 1)$, which is a star. It is easy to construct a matrix $A \in S(G)$ where $\sigma(A) = \{ \lambda_1, \lambda_2^{(n-3)}, \lambda_3, \lambda_4 \}$ for some $\lambda_1, \ldots, \lambda_4$ (possibly $\lambda_2$ and $\lambda_3$ may be swapped). Furthermore, it follows that any eigenvector corresponding to $\lambda_1, \lambda_3,$ or $ \lambda_4$ must be totally nonzero (see Remark \ref{PWrem} for details). Thus, $q(A) = 4$, and the matrix $A$ has the TNIE property. 

Now, assume that the result holds for all connected threshold graphs on at least four vertices with $tr(G) < k$, and suppose that $tr(G) = k \geq 2$. Then, $G = (0, \cdots, 1, \underset{s}{\underbrace{0, \cdots, 0}}, 1)$ with $s\geq 0$. Reading the creation sequence from right to left, we will induct on the subgraph that ends on the second ``1"; that is, $H = (0, \cdots, 1)$. If $H$ is a subgraph on at least four vertices, then by induction, there is a matrix $B \in S(H)$ with the spectrum $\sigma(B) = \{ \lambda_1^{(n_1)}, \lambda_2^{(n_2)}, \lambda_3^{(n_3)}, \lambda_4^{(n_4)} \}$ where either $\lambda_2$ or $\lambda_3$ has a TNIE eigenvector. Without loss of generality, assume $\lambda_2$ has a TNIE eigenvector. Construct a new matrix $A = \left[\begin{array}{c|c}
B & 0 \\ \hline 
0 & \lambda_2 I_s \\
\end{array} \right]$, which corresponds to the subgraph $ H' = (0, \cdots, 1, \underset{s}{\underbrace{0, \cdots, 0}})$. This new matrix $A$ has the spectrum $\sigma(A) = \{ \lambda_1^{(n_1)}, \lambda_2^{(n_2 +s)}, \lambda_3^{(n_3)}, \lambda_4^{(n_4)} \}$, and observe that padding the matrix by zeros in this way allows only the eigenvalue $\lambda_2$ to retain a corresponding TNIE eigenvector. Since $A$ now has a totally nonzero eigenvector, we can invoke Corollary \ref{lem:1-border} and create a $1$-bordering $A'$ of $A$, where $A' \in S(K_1 \vee H')$ and $K_1 \vee H' \cong G = (0, \cdots, 1, \underset{s}{\underbrace{0, \cdots, 0}}, 1)$, such that the multiplicities of $\lambda_1$ and $\lambda_3$ increase by one (set $\mathcal{N} = \{\lambda_1, \lambda_3\}$ in Corollary \ref{lem:1-border}), and the multiplicity of $\lambda_2$ decreases by 1 (set $\mathcal{R}_0 = \{\lambda_2\}$ in Corollary \ref{lem:1-border}). Hence, $\sigma(A') = \{ \lambda_1^{(n_1+1)}, \lambda_2^{(n_2 +s -1)}, \lambda_3^{(n_3 +1)}, \lambda_4^{(n_4)} \}$, so that $q(A') = 4$ for $A' \in S(G) $. Corollary \ref{lem:1-border} also ensures that $\lambda_1$ and $\lambda_3$ each gain a TNIE eigenvector, and since $\lambda_3$ is not an extreme eigenvalue, $A'$ has the TNIE property, as desired.
In the special case when $s=0$ and $n_2=1$, selecting $\mathcal{R}_0=\{\lambda_2\}$ means we choose, for example, $\mathcal{N}= \{\mu, \lambda_3\}$, where $\lambda_1<\mu< \lambda_3$. Thus, resulting in a 1-bordering $A'$ of $A$ with $A' \in S(G)$ with $\sigma(A') = \{ \lambda_1^{(n_1+1)}, \mu, \lambda_3^{(n_3 +1)}, \lambda_4^{(n_4)} \}$, so that $q(A') = 4$.

If $H$ is a subgraph on less than four vertices, then we consider two cases: 

\textbf{Case 1:} If $H$ has 2 vertices, then $H = (0,1)$ and $G = (0, 1, \underset{s}{\underbrace{0, \cdots, 0}}, 1)$ where $s \geq 1$. From Theorem \ref{MonShad}, there exists a matrix $B \in S(H)$ with the spectrum $\sigma(B) = \{ \lambda_1, \lambda_3\}$ where either eigenvalue has a TNIE eigenvector. Without loss of generality, consider $\lambda_2$ and its TNIE eigenvector. Construct a new matrix $A = \left[\begin{array}{c|c}
B & 0 \\ \hline 
0 & \lambda_3 I_s \\
\end{array} \right]$, which corresponds the subgraph $ H' = (0, 1, \underset{s}{\underbrace{0, \cdots, 0}})$. This new matrix $A$ has the spectrum $\sigma(A) = \{ \lambda_1, \lambda_3^{(s+1)}\}$, and the eigenvalue $\lambda_3$ retains a corresponding TNIE eigenvector. Invoking Corollary \ref{lem:1-border}, we can create a 1-bordering $A'$ of $A$ that gives rise to two new eigenvalues $\lambda_2$ and $\lambda_4$  (set $\mathcal{N} = \{\lambda_2, \lambda_4\}$ in Corollary \ref{lem:1-border}), and the multiplicity of $\lambda_3$ decreases by 1 (set $\mathcal{R}_0 = \{\lambda_2\}$ in Corollary \ref{lem:1-border}). Hence, $\sigma(A') = \{ \lambda_1, \lambda_2, \lambda_3^{(s)}, \lambda_4 \}$, so that $q(A') = 4$ for $A' \in S(G)$. Moreover, $A'$ has the TNIE property. 

\textbf{Case 2:} If $H$ has 3 vertices, then $G$ is of the form $G = (0, 1, 1, \underset{s}{\underbrace{0, \cdots, 0}}, 1)$
or $G = (0, 0,  1, \underset{s}{\underbrace{0, \cdots, 0}}, 1)$. In either construction, if $s = 0$, then $n = 4$, which allows us to utilize Theorem \ref{MonShad} and induct using the same argument as above. Assume $s \geq 1$. Since the two constructions follow the same argument, we will illustrate only the argument for $G = (0, 1, 1, \underset{s}{\underbrace{0, \cdots, 0}}, 1)$ where $H = (0, 1, 1)$.  From Theorem \ref{MonShad}, there exists a matrix $B \in S(H)$ with the spectrum $\sigma(B) = \{ \lambda_1, \lambda_2, \lambda_4\}$ where all eigenvalues have a TNIE eigenvector. Consider $\lambda_2$ and its TNIE eigenvector. Construct a new matrix $A = \left[\begin{array}{c|c}
B & 0 \\ \hline 
0 & \lambda_2 I_s \\
\end{array} \right]$, which corresponds to the subgraph $ H' = (0, 1, 1, \underset{s}{\underbrace{0, \cdots, 0}})$. This new matrix $A$ has the spectrum $\sigma(A) = \{ \lambda_1, \lambda_2^{(s+1)}, \lambda_4\}$, and the eigenvalue $\lambda_2$ retains a corresponding TNIE eigenvector. Invoking Corollary \ref{lem:1-border}, we can create a 1-bordering $A'$ of $A$ that gives rise to a new eigenvalue $\lambda_3$ and increases the multiplicity of $\lambda_1$ by one  (set $\mathcal{N} = \{\lambda_1, \lambda_3\}$ in Corollary \ref{lem:1-border}), and the multiplicity of $\lambda_2$ decreases by 1 (set $\mathcal{R}_0 = \{\lambda_2\}$ in Corollary \ref{lem:1-border}). Hence, $\sigma(A') = \{ \lambda_1^{(2)}, \lambda_2^{(s)}, \lambda_3, \lambda_4 \}$, so that $q(A') = 4$ for $A' \in S(G)$. Moreover, $A'$ has the TNIE property. 
\end{proof} 

 A benefit of this technique is that it provides a new perspective on how to tackle the characterization of $q(G)$ for some threshold graphs that have eluded detection using other established methods. An example that best illustrates this is the threshold graph $G = (0, 0, 0, 1, 1, 0, 0, 1)$ from \cite{FM} whose $q(G)$ is currently unknown. 

\begin{ex}
Consider the threshold graph $G = (0, 0, 0, 1, 1, 0, 0, 1)$. We begin by using our technique on the subgraph $H = (0, 0, 0, 1)$. From Theorem \ref{MonShad}, there exists a matrix $B \in S(H)$ such that $\sigma(B) = \{ \lambda_1, \lambda_2, \lambda_3, \lambda_4\}$ each of whose eigenvectors are totally nonzero. Invoking Corollary \ref{lem:1-border}, set $\mathcal{R}_0 =\{ \lambda_2\}$ and $\mathcal{N} = \{ \lambda_1, \lambda_3\}$ to create a one bordering $B'$ of $B$ that has spectrum $\sigma(B') = \{ \lambda_1^{(2)}, \lambda_3^{(2)}, \lambda_4\}$ for $H' = (0, 0, 0, 1, 1)$. Observe that the eigenvalues $\lambda_1$ and $ \lambda_3$ have totally nonzero eigenvectors. Now, consider $\lambda_3$ and its TNIE eigenvector. Construct a new matrix $B'' = \left[\begin{array}{c|c}
B' & 0 \\ \hline 
0 & \lambda_3 I_2 \\
\end{array} \right]$, which corresponds the subgraph $ H'' = (0, 0, 0, 1, 1, 0, 0)$. This new matrix $B''$ has the spectrum $\sigma(B'') = \{ \lambda_1^{(2)}, \lambda_3^{(4)}, \lambda_4\}$, and the eigenvalue $\lambda_3$ retains a corresponding TNIE eigenvector. Invoking  Corollary \ref{lem:1-border}, set $\mathcal{R}_0 =\{ \lambda_3\}$ and $\mathcal{N} = \{ \lambda_1, \lambda_4\}$ to create a one bordering $A$ of $B''$ that has spectrum $\sigma(A) = \{ \lambda_1^{(3)}, \lambda_3^{(3)}, \lambda_4^{(2)}\}$. This shows that there is a matrix $A$ that achieves $q(A) = 3$, so $q(G) = 3$ since $G$ contains a pendant vertex (see \cite{AACF}). 
\end{ex}

Interestingly, locating matrices that produce a low value of $q(A)$ using this method is much more difficult than it appears. The example and remarks presented below describe some of the challenges associated with our technique. 

\begin{ex}
Consider the graph $G = (0, 0, 0, 1, 1, 1)$. This threshold graph is known to have $q(G) = 2$ from \cite{FM}. Can we use the bordering technique to deduce $q(G) = 2$ as well? 

Inspect the subgraph $H = (0, 0, 0, 1)$. From Theorem \ref{MonShad}, there exists a matrix $B \in S(G)$ such that $\sigma(B) = \{\lambda_1, \lambda_2, \lambda_3, \lambda_4 \}$ where every eigenvalue has a corresponding totally nonzero eigenvector. Take $\mathcal{R}_0 = \{ \lambda_2\}$, and $\mathcal{N} = \{ \lambda_1, \lambda_3\}$. Invoking Corollary \ref{lem:1-border} and 1-bordering  $B$ to create a new matrix $B'$, the spectrum of the subgraph $H' = (0, 0, 0, 1, 1)$ is $\sigma(B') = \{\lambda_1 ^{(2)}, \lambda_3^{(2)}, \lambda_4 \}$ where eigenvalues $\lambda_1$ and $\lambda_3$ have totally nonzero eigenvectors. Now, set $\mathcal{R}_0 = \{ \lambda_3\}$, and $\mathcal{N} = \{ \lambda_1, \lambda_4\}$. Invoking Corollary \ref{lem:1-border} again and 1-bordering $B'$ to create a new matrix $A$, the spectrum of the subgraph $G = (0, 0, 0, 1, 1, 1)$ 
is $\sigma(A) = \{\lambda_1 ^{(3)}, \lambda_3^{(1)}, \lambda_4^{(2)} \}$ where the eigenvalues $\lambda_1$ and $\lambda_4$ have totally nonzero eigenvectors. Thus, we are stuck with a matrix $A$ having three distinct eigenvalues not two. 

However, if we select the set $\mathcal{R}_0$ differently, we can achieve different results. For example, if we inspect the subgraph $H = (0, 0, 0, 1)$ with corresponding matrix $B \in S(G)$ and spectrum $\sigma(B) = \{\lambda_1, \lambda_2, \lambda_3, \lambda_4 \}$ again. Take $\mathcal{R}_0 = \{ \lambda_2, \lambda_3\}$, and $\mathcal{N} = \{ \lambda_1, \lambda_{2.5}, \lambda_4\}$. Invoking Corollary \ref{lem:1-border} and 1-bordering  $B$ to create a new matrix $B'$, the spectrum of the subgraph $H' = (0, 0, 0, 1, 1)$ is $\sigma(B') = \{\lambda_1 ^{(2)}, \lambda_{2.5}, \lambda_4^{(2)}\}$ where all eigenvalues have totally nonzero eigenvectors. Now, set $\mathcal{R}_0 = \{ \lambda_{2.5}\}$, and $\mathcal{N} = \{ \lambda_1, \lambda_4\}$. Invoking Corollary \ref{lem:1-border} again and 1-bordering $B'$ to create a new matrix $A$, the spectrum of the subgraph $G = (0, 0, 0, 1, 1, 1)$ 
is $\sigma(A) = \{\lambda_1 ^{(3)}, \lambda_4^{(3)} \}$.
\end{ex}

\begin{rem}
Observe that Theorem \ref{main} may not hold if $q(A) \leq 3$. Consider the case where $G = (0, \cdots, 0, 1)$ is a star with $A \in S(G)$ such that $q(A) = 3$. Then $\sigma(A) = \{ \lambda_1, \lambda_2^{(n-2)},\lambda_3 \}$. However, the root (or center) of a star is the so-called Parter vertex, and $\lambda_2$ for the only multiple eigenvalue. Furthermore, an application of the Parter-Wiener theorem and a basic analysis of the eigenvalue-eigenvector equation for the eigenvalue $\lambda_2$ reveals that any eigenvector corresponding to $\lambda_2$ will have a zero entry corresponding to the root vertex (see Remark \ref{PWrem}). Hence, any matrix in $S(G)$ with spectrum  $\{ \lambda_1, \lambda_2^{(n-2)},\lambda_3 \}$ cannot have the TNIE property.  
\end{rem}

We are now in a position to show that our technique is sufficient to ascertain threshold graphs with four distinct eigenvalues that are in a sense spectrally arbitrary. 

\begin{thm}
    If $G$ is a connected threshold graph on $n \geq 4$ vertices and $\lambda_1< \lambda_2 < \lambda_3 < \lambda_4$ are any four distinct real numbers, then there is a matrix $A\in S(G)$ such that the distinct eigenvalues of $A$ are 
     $\{ \lambda_1, \lambda_2, \lambda_3, \lambda_4\}$. In particular, connected threshold graphs admit a matrix with any four distinct real numbers as its eigenvalues.
     \label{main2}
\end{thm}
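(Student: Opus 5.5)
The plan is to rerun the induction from the proof of Theorem \ref{main}, but this time fix the four targets $\lambda_1<\lambda_2<\lambda_3<\lambda_4$ in advance. We strengthen the inductive hypothesis to the following statement.

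\emph{For every connected threshold graph $G$ on $n\ge 4$ vertices there is $A\in S(G)$ whose set of distinct eigenvalues is exactly $\{\lambda_1,\lambda_2,\lambda_3,\lambda_4\}$. Moreover, $A$ has a totally nonzero eigenvector for $\lambda_2$ and one for $\lambda_3$.}

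Keeping both interior eigenvalues equipped with TNIE vectors leaves room to steer, since bordering consumes one of them at each step. The induction is on $tr(G)$, exactly as in Theorem \ref{main}.

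\textbf{Base cases.} For $n=4$, Theorem \ref{MonShad} gives $A\in S(G)$ with spectrum exactly $\{\lambda_1,\lambda_2,\lambda_3,\lambda_4\}$, and all eigenvectors are totally nonzero.

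For a star ($tr(G)=1$), I would build the matrix directly rather than rely on the form in Remark \ref{PWrem}, because there the multiple eigenvalue's eigenvectors vanish at the center. Instead I would start from the path-free star $K_{1,2}$ or $K_{1,3}$ realized by Theorem \ref{MonShad}, pad with diagonal entries equal to prescribed values, and border by the center. Concretely, $A=\left[\begin{smallmatrix} y & x^T\\ x & D\end{smallmatrix}\right]$ with $D$ diagonal having entries among $\lambda_2,\lambda_3$ (repeated as needed). The spectrum then consists of these repeated diagonal values together with the roots of the secular equation $t-y=\sum x_i^2/(t-d_i)$. By Corollary \ref{lem:1-border} applied to $D$ with $\mathcal R_0=\{\lambda_2,\lambda_3\}$ and $\mathcal N=\{\lambda_1,\mu,\lambda_4\}$ for $\mu\in(\lambda_2,\lambda_3)$, the resulting spectrum is $\{\lambda_1,\lambda_2^{(a)},\mu,\lambda_3^{(b)},\lambda_4\}$. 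That gives five values, which is too many. So for stars I would instead choose $\mathcal N=\{\lambda_1,\lambda_4\}$ and $\mathcal R_0=\{\lambda_2\}$, giving $\lambda_1,\lambda_2^{(n-3)},\lambda_3$ (one copy from $D$), $\lambda_4$. The cost is that $\lambda_2$ now has no TNIE vector, and this is exactly the obstruction in the last remark. Consequently the strengthened hypothesis should only demand a TNIE vector for \emph{some} interior eigenvalue. The star then qualifies through $\lambda_3$: it is simple with eigenvector $(\ast, e_j\text{-type})$, and I expect this to be totally nonzero after a small perturbation argument. If that fails, I would handle stars by noting that $\lambda_3$ is an eigenvalue of the secular equation, whose eigenvector $((t-d_i)^{-1}x_i)$ is totally nonzero whenever $t\notin\sigma(D)$. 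This suggests putting only $\lambda_2$ in $D$ and obtaining $\lambda_1,\lambda_3,\lambda_4$ as the three secular roots. That requires just one distinct diagonal value, so the roots are two, not three. I would therefore use $D=\lambda_2 I_{n-2}\oplus(d)$ with $d$ chosen so that $\lambda_1<\lambda_2<\lambda_3<d<\lambda_4$, and then all three targets are roots with totally nonzero eigenvectors.

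\textbf{Inductive step.} Write $G=(H,0^s,1)$. Let $\lambda_j$ ($j\in\{2,3\}$) be the interior eigenvalue of $B\in S(H)$ with a TNIE vector, and set $A=B\oplus\lambda_j I_s$. I would then apply Corollary \ref{lem:1-border} with $\mathcal R_0=\{\lambda_j\}$ and $\mathcal N=\{\lambda_{j-1},\lambda_{j+1}\}$. The resulting spectrum stays inside the target set, and the Remark after Theorem \ref{thm:1-bordering} gives totally nonzero eigenvectors for both new eigenvalues. When $j=2$ this includes $\lambda_3$, and when $j=3$ it includes $\lambda_2$; in either case the TNIE property persists.

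The main obstacle is that every $\lambda_i$ must survive. Only $\lambda_j$ can be lost, and that happens exactly when $s=0$ and $m_B(\lambda_j)=1$, which is the case where Theorem \ref{main} resorted to an extra $\mu$. To avoid it, I would maintain the multiplicity invariant $m(\lambda_2)+m(\lambda_3)\ge 3$, or alternatively track that the TNIE interior eigenvalue has multiplicity $\ge2$ whenever the next block has $s=0$. When this fails, I would instead use a 2-step bordering with $\mathcal R_0=\{\lambda_2,\lambda_3\}$, $\mathcal N=\{\lambda_1,\mu,\lambda_4\}$, and then re-border to remove $\mu$, as in the second computation of the example for $(0,0,0,1,1,1)$. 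The small cases $|H|\in\{2,3\}$ would be treated as in Cases 1 and 2 of Theorem \ref{main}, choosing $\mathcal N$ so that all four prescribed values appear.
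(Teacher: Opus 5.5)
Your skeleton matches the paper's. You induct on $tr(G)$, pad $B\in S(H)$ with $\lambda_j I_s$ at an interior eigenvalue that has a nowhere-zero eigenvector, and 1-border with $\mathcal R_0=\{\lambda_j\}$, $\mathcal N=\{\lambda_{j-1},\lambda_{j+1}\}$. Putting the TNIE property into the inductive hypothesis is an improvement: the paper's proof of Theorem \ref{main2} uses it, but it is not part of the statement being inducted on. Your final star matrix is also correct and replaces the citation of \cite{JW}. It uses $D=\lambda_2 I_{n-2}\oplus(d)$ with $\lambda_3<d<\lambda_4$, $\mathcal R_0=\{\lambda_2,d\}$, $\mathcal N=\{\lambda_1,\lambda_3,\lambda_4\}$, and $U_0$ with columns $u$ (nowhere zero on the first $n-2$ coordinates) and $e_{n-1}$. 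It gives $\lambda_3$ the nowhere-zero eigenvector $(1,-(D-\lambda_3 I)^{-1}x)$. Delete the earlier attempts in that paragraph. With $\mathcal R_0=\{\lambda_2\}$ and a leaf carrying $\lambda_3$, the bordering vector vanishes at that leaf, so the matrix is not in $S(G)$, and its $\lambda_3$-eigenvector is $(0,e_j)$.

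The genuine gap is the case you flag yourself: $s=0$ and $m_B(\lambda_j)=1$. This case is unavoidable. For $G=(0,0,1,1,1)$, $H=(0,0,1,1)$ has four vertices, so $B$ has four simple eigenvalues, and your invariant $m(\lambda_2)+m(\lambda_3)\ge 3$ already fails there. No single bordering can rescue this step. Here $\mathcal R_0\subseteq\sigma(B)=\{\lambda_1,\dots,\lambda_4\}$, and strict interlacing forces a value outside the target set into $\mathcal N$ whenever $|\mathcal R_0|\ge 2$ or $\mathcal R_0$ contains $\lambda_1$ or $\lambda_4$. So $\mathcal R_0$ must be $\{\lambda_2\}$ or $\{\lambda_3\}$, and the one known to carry a nowhere-zero eigenvector is simple and disappears. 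Your two-step bordering does not apply as written: $G=(H,1)$ has only one more vertex than $H$, and bordering $B$ twice gives a matrix for $(H,1,1)$.

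To make your route work, you would have to restructure the induction so that a terminal segment $(\dots,1,0^{s'},1,1)$ is built from its predecessor $H_0$ by two borderings:
\begin{enumerate}
\item[(i)] first $\mathcal R_0=\{\lambda_2,\lambda_3\}$, $\mathcal N=\{\lambda_1,\mu,\lambda_4\}$;
\item[(ii)] then $\mathcal R_0=\{\mu\}$, $\mathcal N=\{\lambda_2,\lambda_3\}$, not $\{\lambda_1,\lambda_4\}$ as in the example, which would lose $\lambda_2,\lambda_3$.
\end{enumerate}
You would also need to show that the intermediate $\mu$-eigenvector, whose lower block is $U_0(\mu I-D_0)^{-1}{\bf b}$, is nowhere zero for a suitable $\mu\in(\lambda_2,\lambda_3)$. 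The Remark after Theorem \ref{thm:1-bordering} covers only $|\mathcal R_0|=1$; here each coordinate rules out at most one $\mu$. Finally, you would have to treat predecessors $H_0$ that are edgeless or have fewer than four vertices.

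The paper avoids all this with a one-vertex operation that adds an eigenvalue instead of trading one: cloning the dominating vertex $v$ of $H$ with an edge \cite{qsmall}. Concretely, conjugate $B\oplus[\mu]$ by a rotation through a generic angle in the $(v,v')$ coordinate plane, with $\mu\in\{\lambda_1,\dots,\lambda_4\}$ and $\mu\neq b_{vv}$. This gives a matrix in $S(G)$ with spectrum $\sigma(B)\cup\{\mu\}$. Every nowhere-zero eigenvector of $B$ stays nowhere zero, so your strengthened hypothesis survives this step too.
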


\begin{proof}
Let $G$ be a connected threshold graph, and $\lambda_1< \lambda_2 < \lambda_3 < \lambda_4$ be any four distinct real numbers. Suppose $G$ has $n = 4$ vertices. Then, by Theorem \ref{MonShad}, there exists $A \in S(G)$ such that $\sigma(A) = \{\lambda_1, \lambda_2, \lambda_3, \lambda_4 \}$. Now, assume $n \geq 4$, and $tr(G) = 1$. Then, $G \cong (0, \cdots, 0, 1)$, which is a star. As noted previously, the multiplicity list $(1,n-3,1,1)$ or $(1,1,n-3,1)$  is allowed for a star, from which it follows by \cite[Cor. 11]{JW}, that any list of four real numbers can be achieved as the eigenvalues of a matrix for the star with this multiplicity list. Hence, there exists $A \in S(G)$ where $\sigma(A) = \{ \lambda_1, \lambda_2^{(n-3)}, \lambda_3, \lambda_4 \}$.

Now, assume the result holds for all connected threshold graphs on at least four vertices with $tr(G) < k$, and suppose $tr(G) = k \geq 2$. Then, $G \cong (0, \cdots, 1, \underset{s}{\underbrace{0, \cdots, 0}}, 1)$, with $s\geq 0$. Reading the creation sequence from right to left, we will induct on the subgraph that ends on the second ``1"; that is, $H \cong (0, \cdots, 1)$. If $H$ is a subgraph on at least four vertices, then by induction, there is a matrix $B \in S(H)$ with the spectrum $\sigma(B) = \{ \lambda_1^{(n_1)}, \lambda_2^{(n_2)}, \lambda_3^{(n_3)}, \lambda_4^{(n_4)} \}$ where either $\lambda_2$ or $\lambda_3$ has a TNIE eigenvector. Without loss of generality, assume $\lambda_2$ has a TNIE eigenvector. Construct a new matrix $A = \left[\begin{array}{c|c}
B & 0 \\ \hline 
0 & \lambda_2 I_s \\
\end{array} \right]$, which corresponds the subgraph $ H' \cong (0, \cdots, 1, \underset{s}{\underbrace{0, \cdots, 0}})$. This new matrix $A$ has the spectrum $\sigma(A) = \{ \lambda_1^{(n_1)}, \lambda_2^{(n_2 +s)}, \lambda_3^{(n_3)}, \lambda_4^{(n_4)} \}$, and observe that padding the matrix by zeros in this way allows only the eigenvalue $\lambda_2$ to retain a corresponding TNIE eigenvector. Since $A$ now has a totally nonzero eigenvector, we can invoke Theorem \ref{cor:join-with-K1} and create a $1$-bordering $A'$ of $A$, where $A' \in S(K_1 \vee H')$ and $K_1 \vee H' \cong G = (0, \cdots, 1, \underset{s}{\underbrace{0, \cdots, 0}}, 1)$, such that the multiplicities of $\lambda_1$ and $\lambda_3$ increase by one (set $\mathcal{N} = \{\lambda_1, \lambda_3\}$ in Corollary \ref{lem:1-border}), and the multiplicity of $\lambda_2$ decreases by 1 (set $\mathcal{R}_0 = \{\lambda_2\}$ in Corollary \ref{lem:1-border}). Hence, $\sigma(A') = \{ \lambda_1^{(n_1+1)}, \lambda_2^{(n_2 +s -1)}, \lambda_3^{(n_3 +1)}, \lambda_4^{(n_4)} \}$, so that $q(A') = 4$ for $A' \in S(G) $. 
In the special case when $s=0$ and $n_2=1$, we may apply the process of cloning a dominating vertex with an edge as presented in \cite{qsmall}. Thus resulting in a 1-bordering $A'$ of $A$ with $A' \in S(G)$ that satisfies $q(A') = 4$ with distinct eigenvalues $\lambda_1, \lambda_2, \lambda_3, \lambda_4.$

If $H$ is a subgraph on less than four vertices, then these cases will follow by observing the following graphs are known to be spectrally arbitrary for any given allowed multiplicity list:  $K_2 = (0,1)$, $K_3 = (0, 1, 1)$, and $P_3 = (0, 0, 1)$. Using this information as well as specifying the sets $\mathcal{N}$ and $\mathcal{R}_0$ as illustrated in the previous proof, the result that all threshold graphs with four distinct eigenvalues are spectrally arbitrary, relative to allowed multiplicity lists, follows immediately. 
\end{proof}

Previous literature on the characterization of $q(G) \leq 4$ for threshold graphs provides a construction for the matrices that achieve the minimum number of distinct eigenvalues (see \cite{brazil-2}). While our technique is not constructive in this manner, our technique and arguments prove an analogous result using plainer methods whilst gaining threshold graphs with four distinct eigenvalues have a TNIE eigenvector and allow any four real numbers as eigenvalues at the same time. Moreover, Theorems \ref{main} and \ref{main2} can be extended to threshold graphs with isolated vertices, as well as the disjoint union of threshold graphs. For example, let $G=(0, b_1, \cdots, b_s, 1, 0, \cdots,0)$ on at least four vertices. By Theorem \ref{main}, there exists a matrix $A \in S(H)$, where $H=(0, b_1, \cdots, b_s, 1)$ with $q(A)=4$ and having the TNIE property. By appending an appropriately chosen diagonal block to $A$ we obtain a matrix in $S(G)$ with four distinct eigenvalues and satisfies the TNIE property. On the other hand, suppose $G_1$ and $G_2$ are two vertex-disjoint connected threshold graphs, each on at least four vertices. The proof of Theorem \ref{main} can be adapted to construct a matrix with four distinct eigenvalues and the TNIE property, while simultaneously prescribing the location of the interior eigenvalue that admits a totally nonzero eigenvector. Combining this observation with Theorem \ref{main2}, it follows that the disconnected graph $G_1 \sqcup G_2$ admits a matrix with four distinct eigenvalues and possesses the TNIE property. Consequently, the class of graphs admitting a matrix with four distinct eigenvalues and the TNIE property includes threshold graphs with isolated vertices and disjoint unions of connected threshold graphs. In the case when $G_1$ and $G_2$ each have less than four vertices, we can show there is a matrix $A$ such that  $q(A) = 3$, and $A$ has the TNIE property for by selecting a common interior eigenvalue from both spectra of $G_1$ and $G_2$. When $G_1$ and $G_2$ both have exactly two vertices, it can be shown that $q(G) = 2$, and $G$ has the TNIE property.

%\begin{thm}[FM21]\label{T=2}
% Let $G\cong (\underbrace{0,\ldots,0}_{k_1},1, \underbrace{0,\ldots,0}_{k_2},1)$ be a threshold graph of order $n\ge 5$ with $k_1\ge 1$ and $k_2\ge 0$. Then,
%$$q(G)=\begin{cases}
%3~~~~\mbox{if}\, k_1\in \{1,\,2\} ~\mbox{or}~ k_2\in \{0,\,1\},\\
%4~~~~\mbox{otherwise}.
%\end{cases}$$
% \end{thm}

%\begin{thm}
  %  Suppose $G$ is a connected threshold graph on at least 4 vertices. Then $q(G)=4$ if and only if $G\cong (\underbrace{0,\ldots,0}_{k_1},1, \underbrace{0,\ldots,0}_{k_2},1)$ where $k_1 \geq 3$ and $k_2 \geq 2$.
%\end{thm}

\section*{Acknowledgments}
J. Breen is supported in part by an NSERC Discovery Research Grant, Application No.: RGPIN-2021-03775. S.M.\ Fallat is supported in part by an NSERC Discovery Research Grant, Application No.: RGPIN-2025-05272.
 J. Parenteau is supported in part by an NSERC PGSD Award, Application No.: PGS D-599971-2025.

\end{document}